\documentclass[11 pt,twoside, final]{amsart}
\copyrightinfo{0}{Iranian Mathematical Society}
\usepackage{amsmath,amsthm,amscd,amsfonts,amssymb,enumerate}
\usepackage{graphicx}		
\usepackage{color}
\usepackage[colorlinks]{hyperref}
 
\newtheorem{theorem}{Theorem}[section]
\newtheorem{proposition}[theorem]{Proposition}
\newtheorem{lemma}[theorem]{Lemma}
\newtheorem{corollary}[theorem]{Corollary}
\theoremstyle{definition}
\newtheorem{definition}[theorem]{Definition}
\newtheorem{example}[theorem]{Example}
\theoremstyle{remark}

\numberwithin{equation}{section}
 
 \commby{Hamid Pezeshk}
\begin{document}


\title[$\Delta$-weak character amenability]{$\Delta$-weak character amenability of certain Banach algebras}

\author[H. Sadeghi]{H. Sadeghi }
\address[Hamid Sadeghi]{Department of Mathematics, Faculty  of  Science,  University of Isfahan, Isfahan, Iran. }
\email{Sadeghi@sci.ui.ac.ir}

\date{
}

\begin{abstract}
In this paper we introduce the notion of $\Delta$-weak character amenable Banach algebras and  investigate $\Delta$-weak character amenability of certain Banach algebras such as projective tensor product $A\widehat{\otimes}B$, Lau product $A\times_TB$ for every Banach  algebra $ A$ and $B$, where $T$ be a homomorphism from $B$ into $A$,  abstract Segal algebras and module extension Banach algebras.$\vspace{.5cm}$\\
\textbf{2010 Mathematics subject classifucation:}  Primary: 46H25; Secondary: 46M10.\\
\textbf{Keywords:}  Banach algebra, $\Delta$-weak approximate identity, $\Delta$-weak character amenability. 
\end{abstract} \maketitle
\section{\bf Introduction}

Let $A$ be a Banach algebra and let $\varphi\in \Delta(A)$, consisting of all nonzero character on $A$. The concept of $\varphi$-amenability was first introduced by Kaniuth \textit{et al}. in \cite{5}. Specifically, $A$ is called $\varphi$-amenable if there exist a $m\in A^{**}$ such that
\begin{enumerate}
\item $m(\varphi)=1$;
\item $m(f.a)=\varphi(a)m(f) \hspace{.2cm}(a\in A, f\in A^*).$
\end{enumerate}
  Monfared in \cite{502},  introduced and studied the notion of character amenable Banach algebra.  $A$ was called character amenable if it has a bounded right approximate identity and it is $\varphi$-amenable for all $\varphi\in \Delta(A)$.
Many aspects of $\varphi$-amenability have been investigated in \cite{4,5,6}. 
 
  Let $A$ be a Banach algebra and $\varphi\in \Delta(A)\cup \{0\}.$ Following \cite{3}, $A$ is called $\Delta$-weak $\varphi$-amenable if, there exists a $m\in A^{**}$ such that
  \begin{enumerate}
  \item $m(\varphi)=0 $;
  \item $m(\psi.a)=\psi(a)\hspace{.2cm}(a\in \ker(\varphi), \psi\in \Delta(A)).$
\end{enumerate} 
In this paper we use above definition with a slight difference. In fact we say that $A$ is called $\Delta$-weak $\varphi$-amenable if, there exists a $m\in A^{**}$ such that
  \begin{enumerate}
  \item $m(\varphi)=0 $;
  \item $m(\psi.a)=\psi(a)\hspace{.2cm}(a\in A, \psi\in \Delta(A)\setminus\{\varphi\}).$
\end{enumerate} 
 
The aim of the present work is to study $\Delta$-weak character amenability of certain Banach algebras such as projective tensor product $A\widehat{\otimes}B$, Lau product $A\times_TB$,  abstract Segal algebras and module extension Banach algebras.
Indeed, we show that $A\widehat{\otimes}B$ is $\Delta$-weak character amenable if and only if both $A$ and $B$ is $\Delta$-weak character amenable. It is also shown that if $A\times_TB$ is $\Delta$-weak character amenable, then so are $A$ and $B$. In the case that $T(B)$ is dense in $A$, we prove that $\Delta$-weak character amenability of $A$ and $B$ implies  $\Delta$-weak character amenability of $A\times_TB$.  For abstract Segal algebra $B$ with respect to $A$, we investigate relation  between $\Delta$-weak character amenability of $A$ and $B$. Finally, for a Banach algebra $A$ and $A$-bimodule $X$ we show that $A\oplus_1X$ is $\Delta$-weak character amenable if and only if $A$ is $\Delta$-weak character amenable.
\section{$\Delta$-weak character amenability of $A\widehat{\otimes}B$}
 We commence this section with the following definition:
\begin{definition}
 Let $A$ be a Banach algebra and $\varphi\in \Delta(A)\cup \{0\}.$ We say that $A$ is called $\Delta$-weak $\varphi$-amenable if, there exists a $m\in A^{**}$ such that
  \begin{enumerate}
  \item $m(\varphi)=0 $;
  \item $m(\psi.a)=\psi(a)\hspace{.2cm}(a\in A, \psi\in \Delta(A)\setminus\{\varphi\}).$
\end{enumerate} 
\end{definition}
\begin{example}
Let $A$ be a Banach algebra such that $\Delta(A)=\{\varphi,\psi\}$, where $\varphi\neq\psi$. Hence, by Theorem 3.3.14 of \cite{8}, there exists a $a_0\in A$ with $\varphi(a_0)=0$ and $\psi(a_0)=1$. Put $m=\widehat{a_0}$. Then $m(\varphi)=\widehat{a_0}(\varphi)=\varphi(a_0)=0$ and for every $a\in A$, we have 
$$m(\psi.a)=\widehat{a_0}(\psi.a)=\psi.a(a_0)=\psi(aa_0)=\psi(a).$$
Therefore $A$ is $\Delta$-weak $\varphi$-amenable.
\end {example}
\begin{example}
Let $A$ be a Banach algebra and $\varphi\in \Delta(A)\cup\{0\}$. Suppose that $A$ is a $\varphi$-amenable and has a bounded right approximate identity. By Corollary 2.3 of \cite{5}, $\ker(\varphi)$ has a bounded right approximate identity.  Let $(e_{\alpha})_{\alpha}$ be a bounded right approximate identity for $\ker(\varphi)$. If there exists $a_0\in A$ with $\varphi(a_0)=1$ and $\lim_{\alpha}\big|\psi(a_0e_{\alpha})-\psi(a_0)\big|=0$ for all $\psi\in \Delta(A)\setminus \{\varphi\}$, then $A$ is $\Delta$-weak $\varphi$-amenable. For see this let $m=w^*-\lim_{\alpha}(\widehat{e_{\alpha}})$. Now, we have 
$$m(\varphi)=\lim_{\alpha}\widehat{e_{\alpha}}(\varphi)=\lim_{\alpha}\varphi(e_{\alpha})=0,$$
and for every $\psi\in \Delta(A)\setminus \{\varphi\}$ and $a\in \ker(\varphi)$,
$$m(\psi.a)=\lim_{\alpha}\widehat{e_{\alpha}}(\psi.a)=\lim_{\alpha}\psi.a(e_{\alpha})=\lim_{\alpha}\psi(ae_{\alpha})=\psi(a).$$
Let $a\in A$. Then $a-\varphi(a)a_0\in \ker(\varphi)$ and for every $\psi\in \Delta(A)\setminus \{\varphi\}$, we have 
$$m\big(\psi.(a-\varphi(a)a_0)\big)=\psi\big(a-\varphi(a)a_0\big).$$
Therefore $m(\psi.a)=\psi(a)$. So $A$ is $\Delta$-weak $\varphi$-amenable.
\end{example}
 \begin{definition}
 Let $A$ be a Banach algebra. We say that $A$ is  $\Delta$-weak character amenable if it is $\Delta$-weak $\varphi$-amenable for every $\varphi\in \Delta(A)\cup \{0\}.$
 \end{definition}
 \begin{definition}
 Let $A$ be a Banach algebra. The net $(a_{\alpha})_{\alpha}$ in $A$ is called a $\Delta$-weak approximate identity if,  $|\varphi(aa_{\alpha})-\varphi(a)|\longrightarrow 0$, for each $a\in A$ and $\varphi\in \Delta(A).$
 \end{definition}
 
 Note that the approximate identity and $\Delta$-weak approximate identity of a Banach algebra are different. Jones and Lahr  proved that if $S=\Bbb Q^+$ the semigroup algebra $l^1(S)$ has a bounded $\Delta$-weak approximate identity, but it does not have any bounded or unbounded approximate identity (see \cite{9}).
 
The proof of the following theorem is omitted, since it can be proved in the same direction
of Theorem 2.2 of \cite{3}.
 \begin{theorem}\label{thm1}
 Let $A$ be a Banach algebra and $\varphi\in \Delta(A)\cup\{0\}$. Then $A$ is $\Delta$-weak $\varphi$-amenable if and only if $\ker(\varphi)$ has a bounded $\Delta$-weak approximate identity for $A$ $($i.e. there exists a net $(a_{\alpha})_{\alpha}\subseteq \ker(\varphi)$ such that $|\psi(aa_{\alpha})-\psi(a)|\longrightarrow 0$, for each $a\in A$ and $\psi\in \Delta(A)\setminus\{\varphi\})$.
 \end{theorem}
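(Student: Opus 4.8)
The plan is to move back and forth between a net in $A$ and its canonical image in $A^{**}$, following the scheme of Theorem 2.2 of \cite{3}. Throughout I use that the module action is given by $(\psi.a)(b)=\psi(ab)$ for $\psi\in A^{*}$ and $a,b\in A$, as already exploited in Example 2.3.

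For the sufficiency, suppose $(a_{\alpha})_{\alpha}\subseteq\ker(\varphi)$ is a bounded $\Delta$-weak approximate identity for $A$. The net $(\widehat{a_{\alpha}})_{\alpha}$ is bounded in $A^{**}=(A^{*})^{*}$, so by Banach--Alaoglu it has a weak-$*$ cluster point $m\in A^{**}$; passing to a subnet I may assume $\widehat{a_{\alpha}}\to m$ in the weak-$*$ topology. Evaluating at $\varphi$ gives $m(\varphi)=\lim_{\alpha}\varphi(a_{\alpha})=0$ since each $a_{\alpha}\in\ker(\varphi)$, and evaluating at $\psi.a$ for $\psi\in\Delta(A)\setminus\{\varphi\}$ and $a\in A$ gives $m(\psi.a)=\lim_{\alpha}(\psi.a)(a_{\alpha})=\lim_{\alpha}\psi(aa_{\alpha})=\psi(a)$ by the defining property of the net. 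Hence $m$ witnesses $\Delta$-weak $\varphi$-amenability.

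For the necessity, let $m\in A^{**}$ satisfy $m(\varphi)=0$ and $m(\psi.a)=\psi(a)$. By Goldstine's theorem there is a bounded net $(a_{\alpha})_{\alpha}\subseteq A$ with $\widehat{a_{\alpha}}\to m$ weak-$*$, whence $\varphi(a_{\alpha})\to 0$ and $\psi(aa_{\alpha})\to\psi(a)$ for all $\psi\in\Delta(A)\setminus\{\varphi\}$ and $a\in A$. If $\varphi=0$ then $\ker(\varphi)=A$ and $(a_{\alpha})_{\alpha}$ is already the required approximate identity. The only genuine obstacle arises when $\varphi\in\Delta(A)$: the Goldstine net need not lie in $\ker(\varphi)$, since we only control $\varphi(a_{\alpha})\to 0$ rather than $\varphi(a_{\alpha})=0$. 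I would remove this by a rank-one correction: fix $u\in A$ with $\varphi(u)=1$ and set $b_{\alpha}:=a_{\alpha}-\varphi(a_{\alpha})\,u$, so that $\varphi(b_{\alpha})=\varphi(a_{\alpha})-\varphi(a_{\alpha})\varphi(u)=0$ and $b_{\alpha}\in\ker(\varphi)$, while boundedness of $(b_{\alpha})_{\alpha}$ follows from $\varphi(a_{\alpha})\to 0$. Finally, for $\psi\in\Delta(A)\setminus\{\varphi\}$ and $a\in A$ we have $\psi(ab_{\alpha})=\psi(aa_{\alpha})-\varphi(a_{\alpha})\psi(au)\to\psi(a)$, so $(b_{\alpha})_{\alpha}$ is the desired bounded $\Delta$-weak approximate identity for $A$ in $\ker(\varphi)$. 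The point to watch is precisely this correction step, together with the bookkeeping of excluding $\varphi$ from the characters being tested; everything else is a routine weak-$*$ limit computation.
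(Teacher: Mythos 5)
Your proof is correct, and it follows exactly the route the paper intends (the paper omits the proof, deferring to Theorem 2.2 of \cite{3}, and its own Example 2.3 already uses the same weak-$*$ limit device): Banach--Alaoglu for sufficiency, Goldstine plus the rank-one correction $b_{\alpha}=a_{\alpha}-\varphi(a_{\alpha})u$ for necessity. Both directions check out, including the boundedness of $(b_{\alpha})_{\alpha}$ and the separate treatment of $\varphi=0$.
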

 For $f\in A^*$ and $g\in B^*$, let $f\otimes g$ denote the element of $(A\widehat{\otimes}B)^*$ satisfying $(f\otimes g)(a\otimes b)=f(a)g(b)$ for all $a\in A$ and $b\in B$. Note that, with this notion, 
 $$\Delta(A\widehat{\otimes}B)=\{\varphi\otimes \psi:\varphi\in \Delta(A), \psi\in \Delta(B)\}.$$
 
 \begin{theorem}
 Let $A$ and $B$  be Banach algebras and let $\varphi\in \Delta(A)\cup\{0\}$ and $ \psi\in \Delta(B)\cup\{0\}$. Then $A\widehat{\otimes}B$ is $\Delta$-weak $(\varphi\otimes \psi)$-amenable if and only if $A$ is $\Delta$-weak $\varphi$-amenable and $B$ is $\Delta$-weak $\psi$-amenable.
 \end{theorem}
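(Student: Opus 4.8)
The plan is to reduce both directions to the approximate-identity criterion of Theorem~\ref{thm1}, after one simplifying observation. For a character $\chi$ on a Banach algebra $C$, a bounded net $(c_{\gamma})\subseteq\ker\chi$ is a $\Delta$-weak approximate identity for $C$ with respect to $\Delta(C)\setminus\{\chi\}$ if and only if $\chi'(c_{\gamma})\to1$ for every $\chi'\in\Delta(C)\setminus\{\chi\}$; this is immediate from $\chi'(cc_{\gamma})=\chi'(c)\chi'(c_{\gamma})$ together with the choice of $c$ with $\chi'(c)\neq0$. Combined with $\Delta(A\widehat{\otimes}B)=\{\varphi'\otimes\psi':\varphi'\in\Delta(A),\psi'\in\Delta(B)\}$, this turns the assertion into a pointwise statement on the spectrum: $A\widehat{\otimes}B$ is $\Delta$-weak $(\varphi\otimes\psi)$-amenable if and only if there is a bounded net $(c_{\gamma})\subseteq\ker(\varphi\otimes\psi)$ with $(\varphi'\otimes\psi')(c_{\gamma})\to1$ for all $(\varphi',\psi')\neq(\varphi,\psi)$.

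For the forward direction I would use the two slice homomorphisms $R_{\psi}\colon A\widehat{\otimes}B\to A$ and $L_{\varphi}\colon A\widehat{\otimes}B\to B$ determined on elementary tensors by $R_{\psi}(a\otimes b)=\psi(b)a$ and $L_{\varphi}(a\otimes b)=\varphi(a)b$ (assuming here $\varphi,\psi\neq0$). Each is a bounded algebra homomorphism, and a direct check on elementary tensors gives $\varphi'\circ R_{\psi}=\varphi'\otimes\psi$ and $\psi'\circ L_{\varphi}=\varphi\otimes\psi'$. Hence, starting from a net $(c_{\gamma})$ as above, the images $a_{\gamma}:=R_{\psi}(c_{\gamma})$ form a bounded net with $\varphi(a_{\gamma})=(\varphi\otimes\psi)(c_{\gamma})=0$ and $\varphi'(a_{\gamma})=(\varphi'\otimes\psi)(c_{\gamma})\to1$ for every $\varphi'\neq\varphi$, so by Theorem~\ref{thm1} $A$ is $\Delta$-weak $\varphi$-amenable; the symmetric argument through $L_{\varphi}$ yields $B$. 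The degenerate cases $\varphi=0$ or $\psi=0$ I would handle separately by slicing against an auxiliary character.

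For the converse I would take bounded nets $(a_{\alpha})\subseteq\ker\varphi$ and $(b_{\beta})\subseteq\ker\psi$ supplied by Theorem~\ref{thm1}, fix $a_{0}\in A$ and $b_{0}\in B$ with $\varphi(a_{0})=\psi(b_{0})=1$, and test the candidate
\[
c_{\alpha\beta}=a_{0}\otimes b_{\beta}+a_{\alpha}\otimes b_{0}-a_{\alpha}\otimes b_{\beta}.
\]
One checks immediately that $(\varphi\otimes\psi)(c_{\alpha\beta})=0$, and that on the two ``mixed'' families of characters the transform behaves correctly: $(\varphi\otimes\psi')(c_{\alpha\beta})=\psi'(b_{\beta})\to1$ for $\psi'\neq\psi$, and $(\varphi'\otimes\psi)(c_{\alpha\beta})=\varphi'(a_{\alpha})\to1$ for $\varphi'\neq\varphi$. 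The main obstacle is the remaining generic case in which $\varphi'\neq\varphi$ and $\psi'\neq\psi$ hold simultaneously: there the limit equals $\varphi'(a_{0})+\psi'(b_{0})-1$, which need not be $1$, because $a_{0}$ and $b_{0}$ only control the single characters $\varphi$ and $\psi$. Securing this case is the crux of the argument: one must arrange that $a_{0},b_{0}$ behave like identities against every character (so that $\varphi'(a_{0})\to1$ and $\psi'(b_{0})\to1$ as well), which is exactly where an approximate-identity-type input on $A$ and $B$, or a more careful simultaneously-indexed correcting net, has to be used. Once the generic case is resolved, $(c_{\alpha\beta})$ is the required bounded net in $\ker(\varphi\otimes\psi)$ and Theorem~\ref{thm1} completes the proof.
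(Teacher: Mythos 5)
Your forward direction is fine and, apart from the degenerate cases $\varphi=0$ or $\psi=0$ that you only promise to handle, it is the paper's argument in slightly different clothing: the paper fixes $b_0$ with $\psi(b_0)=1$ and defines $m_{\varphi}(f)=m(f\otimes\psi)$, which is exactly the adjoint of your slice homomorphism $R_{\psi}$. Your preliminary reduction of the $\Delta$-weak approximate identity condition to the scalar condition $\chi'(c_{\gamma})\to 1$ is correct and is a genuinely useful simplification. The converse, however, is not proved: you candidly record that for characters $\varphi'\otimes\psi'$ with $\varphi'\neq\varphi$ \emph{and} $\psi'\neq\psi$ your net $c_{\alpha\beta}=a_0\otimes b_{\beta}+a_{\alpha}\otimes b_0-a_{\alpha}\otimes b_{\beta}$ only yields the limit $\varphi'(a_0)+\psi'(b_0)-1$, and you leave the repair as an unexecuted intention. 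That is a genuine gap, and it is the bulk of the work in this direction. The fix inside your own framework is to replace the fixed elements by the corrected nets $u_{\alpha}=a_{\alpha}+a_0-a_0a_{\alpha}$ and $v_{\beta}=b_{\beta}+b_0-b_0b_{\beta}$: then $\varphi(u_{\alpha})=1$ while $\varphi'(u_{\alpha})=1-(1-\varphi'(a_0))(1-\varphi'(a_{\alpha}))\to 1$ for $\varphi'\neq\varphi$, so $\chi(u_{\alpha})\to 1$ for \emph{every} $\chi\in\Delta(A)$, and similarly for $v_{\beta}$. The bounded net $c_{\alpha\beta}=a_{\alpha}\otimes v_{\beta}+u_{\alpha}\otimes b_{\beta}-a_{\alpha}\otimes b_{\beta}$ lies in $\ker(\varphi\otimes\psi)$ and satisfies $(\varphi'\otimes\psi')(c_{\alpha\beta})\to 1$ in all three cases (mixed and generic), which completes the converse via the approximate-identity criterion.

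It is worth saying that the paper's own converse takes the complementary route and has the complementary defect: it uses the plain net $a_{\alpha}\otimes b_{\beta}$ and verifies convergence only when both $\varphi'\neq\varphi$ and $\psi'\neq\psi$ hold --- precisely the case you could not close --- while saying nothing about the mixed characters $\varphi\otimes\psi'$ and $\varphi'\otimes\psi$, for which that net in fact fails, since $(\varphi\otimes\psi')\bigl(G\cdot(a_{\alpha}\otimes b_{\beta})\bigr)=0$ for every $G$ because $\varphi(a_{\alpha})=0$. So neither your net nor the paper's alone establishes the converse; the corrected net above does. Your identification of the problematic case is therefore accurate and even sharper than the published argument, but as submitted the proposal does not constitute a proof of the "if" direction.
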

 \begin{proof}
 Suppose that $A\widehat{\otimes}B$ is $\Delta$-weak $(\varphi\otimes \psi)$-amenable.  So, there exists $m\in (A\widehat{\otimes}B)^{**}$ such that 
 $$ m( \varphi\otimes \psi)=0, \hspace{.2cm}m(\varphi'\otimes \psi'.a\otimes b)=\varphi'\otimes \psi'(a\otimes b),$$
 for all $ a\otimes b\in A\widehat{\otimes}B, \varphi'\otimes \psi'\in \Delta(A\widehat{\otimes}B).$
   Choose $b_0\in B$ such that $\psi(b_0)=1$, and define $m_{\varphi}\in A^{**}$ by $m_{\varphi}(f)=m(f\otimes \psi)(f\in A^*)$. Then $m_{\varphi}(\varphi)=m(\varphi\otimes \psi)=0$ and for every $a\in A$ and $\varphi'\in \Delta(A)$, we have
   \begin{align*}
   m_{\varphi}(\varphi'.a)&=m(\varphi'.a\otimes \psi)=m((\varphi'.a\otimes \psi.b_0)\\
   &=m(\varphi'\otimes \psi.a\otimes b_0)=\varphi'\otimes \psi(a\otimes b_0)\\
   &=\varphi'(a).
\end{align*}    
Thus $A$ is $\Delta$-weak $\varphi$-amenable. By a similar argument as above we may show that $B$ is $\Delta$-weak $\psi$-amenable.

Conversely, let $A$ is $\Delta$-weak $\varphi$-amenable and $B$ is $\Delta$-weak $\psi$-amenable. By Theorem \ref{thm1}, there are bounded nets $(a_{\alpha})_{\alpha}$ and $(b_{\beta})_{\beta}$ in $\ker(\varphi)$ and $\ker(\psi)$, respectively, such that $\big|\varphi'(aa_{\alpha})-\varphi'(a)\big|\longrightarrow 0$ and $\big|\psi'(bb_{\beta})- \psi'(b)\big|\longrightarrow 0$ for all $a\in A, b\in B, \varphi'\in \Delta(A), \psi'\in \Delta(B)$, where $\varphi'\neq \varphi$  and $\psi'\neq \psi$. Consider the bounded net $((a_{\alpha},b_{\beta}))_{(\alpha,\beta)}$ in $A\widehat{\otimes}B$.  We show that $((a_{\alpha},b_{\beta}))_{(\alpha,\beta)}$ is a bounded $\Delta$-weak approximate identity for $A\widehat{\otimes}B$ in $\ker(\varphi\otimes \psi)$. Indeed, let $\|a_{\alpha}\|\leq M_1, \|b_{\beta}\|\leq M_2$ and let $F=\sum_{i=1}^Nc_i\otimes d_i\in A\widehat{\otimes}B$. For every $\varphi'\in \Delta(A)\setminus\{0\}$ and $\psi'\in \Delta(B)\setminus \{0\}$, we have
\begin{align*}
&|\varphi'\otimes \psi'(F.a_{\alpha}\otimes b_{\beta})-\varphi'\otimes \psi'(F)|\\
&\quad=\big|\sum_{i=1}^N\Big[\big(\varphi'(c_ia_{\alpha})-\varphi'(c_i)\big)\psi'(d_ib_{\beta})+\varphi'(c_i)\big(\psi'(d_ib_{\beta})-\psi'(d_i)\big)\Big]\big|\\
&\quad\leq \sum_{i=1}^NM_2\|d_i\|\|\psi'\|\big|\varphi'(c_ia_{\alpha})-\varphi'(c_i)\big|+\sum_{i=1}^N\|\varphi'\|\|c_i\|\big|\psi'(d_ib_{\beta})-\psi'(d_i)\big|\\
&\quad\longrightarrow 0.
\end{align*} 
Now let $G\in A\widehat{\otimes}B$, so there exist sequences $(c_i)_i\subseteq A$ and $(d_i)_i\subseteq B$ such that $G=\sum_{i=1}^{\infty}c_i\otimes d_i$ with $\sum_{i=1}^{\infty}\|c_i\|\|d_i\|<\infty.$ Let $\varepsilon>0$ be given, we choose $N\in \Bbb N$ such that $\sum_{i=N+1}^{\infty}\|c_i\|\|d_i\|<\varepsilon/4M_1M_2\|\varphi'\|\|\psi'\|.$ Put $F=\sum_{i=1}^Nc_i\otimes d_i$. Since $|\varphi'\otimes \psi'(F.a_{\alpha}\otimes b_{\beta})-\varphi'\otimes \psi'(F)|\longrightarrow 0$, it follows that there exists $(\alpha_0,\beta_0)$ such that $|\varphi'\otimes \psi'(F.a_{\alpha}\otimes b_{\beta})-\varphi'\otimes \psi'(F)|<\varepsilon/2$ for all $(\alpha,\beta)\geq (\alpha_0,\beta_0).$ Now for such a $(\alpha,\beta)$, we have
\begin{align*}
&\big|\varphi'\otimes \psi'(G.a_{\alpha}\otimes b_{\beta})-\varphi'\otimes \psi'(G)\big|\\
&\quad=\big|\varphi'\otimes \psi'(F.a_{\alpha}\otimes b_{\beta})-\varphi'\otimes \psi'(F)\\
&\quad\quad+\sum_{i=1+N}^{\infty}\big(\varphi'(c_ia_{\alpha})\psi'(d_ib_{\beta})-\varphi'(c_i)\psi'(d_i)\big)\big|\\
&\quad\leq \varepsilon/2+2M_1M_2\|\varphi'\|\|\psi'\|\sum_{i+N}^{\infty}\|c_i\|\|d_i\|\leq \varepsilon/2+\varepsilon/2=\varepsilon.
\end{align*}
Hence $\big|\varphi'\otimes \psi'(G.a_{\alpha}\otimes b_{\beta})-\varphi'\otimes \psi'(G)\big|\longrightarrow 0$. Also, clearly $\big|\varphi'\otimes \psi'(G.a_{\alpha}\otimes b_{\beta})-\varphi'\otimes \psi'(G)\big|\longrightarrow 0$ for $\varphi'=0$ and $\psi'=0$ and it is easy to see that $((a_{\alpha},b_{\beta}))_{(\alpha,\beta)}\subset \ker(\varphi\otimes \psi).$ So $((a_{\alpha},b_{\beta}))_{(\alpha,\beta)}$ is a bounded $\Delta$-weak approximate identity for $A\widehat{\otimes}B$ in $\ker(\varphi\otimes \psi)$. Therefore  $A\widehat{\otimes}B$ is $\Delta$-weak $(\varphi\otimes \psi)$-amenable, again by Theorem \ref{thm1}.
 \end{proof}
 \begin{corollary}
 Let $A$ and $B$  be Banach algebras. Then $A\widehat{\otimes}B$ is $\Delta$-weak character amenable if and only if both $A$ and $B$ are $\Delta$-weak  character amenable.
 \end{corollary}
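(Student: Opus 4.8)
The plan is to derive the corollary directly from the preceding theorem by quantifying over all characters. Recall that $A\widehat{\otimes}B$ is $\Delta$-weak character amenable precisely when it is $\Delta$-weak $\chi$-amenable for every $\chi\in\Delta(A\widehat{\otimes}B)\cup\{0\}$. Using the identification $\Delta(A\widehat{\otimes}B)=\{\varphi\otimes\psi:\varphi\in\Delta(A),\psi\in\Delta(B)\}$ recorded before the theorem, together with the observation that $\varphi\otimes\psi=0$ as soon as either factor vanishes, the set $\Delta(A\widehat{\otimes}B)\cup\{0\}$ is exactly the set of functionals $\varphi\otimes\psi$ with $\varphi\in\Delta(A)\cup\{0\}$ and $\psi\in\Delta(B)\cup\{0\}$. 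Thus the statement to prove becomes a statement about the pairs $(\varphi,\psi)$ ranging over $(\Delta(A)\cup\{0\})\times(\Delta(B)\cup\{0\})$.

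For the forward implication I would assume $A\widehat{\otimes}B$ is $\Delta$-weak character amenable and fix an arbitrary $\varphi\in\Delta(A)\cup\{0\}$. Choosing any $\psi\in\Delta(B)\cup\{0\}$ (for instance $\psi=0$, which is always available), the functional $\varphi\otimes\psi$ lies in $\Delta(A\widehat{\otimes}B)\cup\{0\}$, so $A\widehat{\otimes}B$ is $\Delta$-weak $(\varphi\otimes\psi)$-amenable. Applying the theorem to this pair yields that $A$ is $\Delta$-weak $\varphi$-amenable. Since $\varphi$ was arbitrary, $A$ is $\Delta$-weak character amenable, and the symmetric argument (fixing $\psi\in\Delta(B)\cup\{0\}$ and pairing it with $\varphi=0$) gives the same conclusion for $B$.

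For the converse I would assume both $A$ and $B$ are $\Delta$-weak character amenable and take an arbitrary $\chi\in\Delta(A\widehat{\otimes}B)\cup\{0\}$, writing $\chi=\varphi\otimes\psi$ with $\varphi\in\Delta(A)\cup\{0\}$ and $\psi\in\Delta(B)\cup\{0\}$ by the identification above. Then $A$ is $\Delta$-weak $\varphi$-amenable and $B$ is $\Delta$-weak $\psi$-amenable by hypothesis, so the theorem gives that $A\widehat{\otimes}B$ is $\Delta$-weak $(\varphi\otimes\psi)$-amenable, i.e. $\Delta$-weak $\chi$-amenable. As $\chi$ was arbitrary, $A\widehat{\otimes}B$ is $\Delta$-weak character amenable.

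The only genuinely delicate point is the bookkeeping around the zero character: because $\varphi\otimes\psi=0$ for several distinct pairs $(\varphi,\psi)$, one must check that the universal quantifier over pairs really factors as ``($\forall\varphi$) and ($\forall\psi$)''. This factorization is valid here exactly because both index sets $\Delta(A)\cup\{0\}$ and $\Delta(B)\cup\{0\}$ are nonempty (each contains $0$), which is what lets me pair an arbitrary $\varphi$ with a fixed auxiliary $\psi$ in the forward direction. Beyond this, the argument is a routine transcription of the theorem.
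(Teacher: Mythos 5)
Your overall strategy --- deduce the corollary from the theorem by letting the characters range over everything --- is exactly what the paper intends (it gives no separate proof), and your converse direction is fine. The problem is in the forward direction, at precisely the point you single out as ``the only genuinely delicate point'': you pair an arbitrary $\varphi\in\Delta(A)\cup\{0\}$ with the auxiliary character $\psi=0$. Then $\varphi\otimes\psi$ is the zero functional for \emph{every} $\varphi$, so the instance of the theorem you invoke reads: ``$A\widehat{\otimes}B$ is $\Delta$-weak $0$-amenable iff $A$ is $\Delta$-weak $\varphi$-amenable and $B$ is $\Delta$-weak $0$-amenable.'' Taken literally for all $\varphi$ simultaneously this would make $\Delta$-weak $\varphi$-amenability of $A$ independent of $\varphi$, which is absurd; and the degenerate instance is in fact false. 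For example, with $A$ the disc algebra and $B=\mathbb{C}$ one has $A\widehat{\otimes}B\cong A$ unital, hence $\Delta$-weak $0$-amenable, yet a bounded net in $\ker\varphi_{z_0}$ ($z_0$ an interior point) cannot converge pointwise to $1$ off $z_0$ by Vitali's theorem, so $A$ is not $\Delta$-weak $\varphi_{z_0}$-amenable. The paper's proof of the theorem silently excludes this case: its forward direction begins by choosing $b_0\in B$ with $\psi(b_0)=1$, which forces $\psi\neq 0$.

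The repair is to pair $\varphi$ with a \emph{nonzero} $\psi\in\Delta(B)$ (and symmetrically a nonzero $\varphi\in\Delta(A)$ when deducing the statement for $B$). Then $\varphi\otimes\psi$ lies in $\Delta(A\widehat{\otimes}B)$ when $\varphi\neq 0$ and equals $0$ when $\varphi=0$, the amenability hypothesis on $A\widehat{\otimes}B$ applies to it, and the theorem's proof genuinely covers this pair. Note that this requires $\Delta(A)$ and $\Delta(B)$ to be nonempty, an implicit hypothesis of the corollary (if $\Delta(B)=\emptyset$ then $\Delta(A\widehat{\otimes}B)=\emptyset$ and $A\widehat{\otimes}B$ is vacuously $\Delta$-weak character amenable regardless of $A$). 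So the nonemptiness you actually need is not ``each index set contains $0$'' but ``each algebra admits at least one nonzero character''; your bookkeeping resolves the delicate point in exactly the wrong direction.
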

 
 \section{$\Delta$-weak character amenability of $A\times_TB$}
 
The Lau product of two Banach algebras was first introduced by Lau \cite{51}. 
Extension to arbitrary Banach algebras was proposed by M. S. Monfared \cite{6} with the notation $A\times_{\theta}B$, where $\theta$ is a non-zero multiplication linear functional on $B$. 

Recently, in the case where $A$ is commutative, a new extension of Lau product introduced and has been studied by Bhatt and Dabhi \cite{53}. In \cite{54}, the authors, by slight change in the definition of multiplication given by Bhatt and Dabhi on $A\times_{\varphi}B$, investigated the two notions, biprojectivity and biflatness on $ A\times_{\varphi}B$ for an arbitrary Banach algebra $A$, where $\varphi$ is a Banach algebra homomorphism from $B$ into $A$.

Let $A$ and $B$ be two Banach algebras and let $T\in$ Hom$(B,A)$, the space consisting of all Banach algebra homomorphism from $B$ into $A$. Moreover, assume that $\|T\|\leq 1$. Following \cite{54}, the cartesian product space $A\times B$ equipped with the following algebra multiplication
$$(a_1,b_1).(a_2,b_2)=(a_1a_2+a_1T(b_2)+T(b_1)a_2,b_1b_2), \hspace{.2cm}(a_1,a_2\in A, b_1,b_2\in B),$$
and the norm $\|(a,b)\|=\|a\|_A+\|b\|_B$, defined a Banach algebra which is denoted by $A\times_TB$.

We note that the dual space $(A\times_TB)^*$ can be identified with $A^*\times B^*$, via 
$$\langle(f,g),(a,b)\rangle=\langle a,f\rangle+\langle b,g\rangle\hspace{.2cm}(a\in A, f\in A^*, b\in B, g\in B^*).$$  
 For more details see Theorem 1.10.13 of \cite{72}.
 Moreover, $(A\times_TB)^*$ is a $(A\times_TB)$-bimodule with the module operations given by 
 \begin{eqnarray}\label{1}
(f,g).(a,b)=\big(f.a+f.T(b),f\circ (L_aT)+g.b\big)\hspace{.2cm}, 
\end{eqnarray}
\begin{eqnarray}\label{2}
(a,b).(f,g)=\big(a.f+T(b).f,f\circ (R_aT)+b.g\big)\hspace{.2cm},
\end{eqnarray}
for all $a\in A, b\in B$ and  $f\in A^*,g\in B^*$ where $L_aT:B\rightarrow A$ and $R_aT:B\rightarrow A$ are defined by $L_aT(b)=aT(b)$ and $R_aT(b)=T(b)a \hspace{.2cm}( b\in B)$. 

 \begin{proposition}
 Let $A$ and $B$ be Banach algebras and $T\in \mathtt{Hom}(B,A)$. Then $A\times_TB$ has  a $\Delta$-weak approximate identity if and only if $A$ and $B$ have $\Delta$-weak approximate identities.
 \end{proposition}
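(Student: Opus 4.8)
The plan is to reduce both implications to the behaviour of characters, after first determining $\Delta(A\times_TB)$. A direct computation from multiplicativity (or a reference to \cite{54,6}) should give
$$\Delta(A\times_TB)=\{(f,f\circ T):f\in\Delta(A)\}\cup\{(0,g):g\in\Delta(B)\},$$
where a pair $(f,g)\in A^*\times B^*$ acts by $\langle(f,g),(a,b)\rangle=f(a)+g(b)$. To see this I would set each coordinate to zero in turn, which forces $f\in\Delta(A)\cup\{0\}$ and $g\in\Delta(B)\cup\{0\}$, and then read off from the cross term $f(a_1T(b_2))=f(a_1)g(b_2)$ that $g=f\circ T$ whenever $f\neq0$.

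The observation that makes the argument short is a reformulation of the defining condition: for any Banach algebra $C$, a net $(u_\gamma)_\gamma$ is a $\Delta$-weak approximate identity if and only if $\chi(u_\gamma)\to1$ for every $\chi\in\Delta(C)$. This follows since $\chi(xu_\gamma)=\chi(x)\chi(u_\gamma)$ gives $|\chi(xu_\gamma)-\chi(x)|=|\chi(x)|\,|\chi(u_\gamma)-1|$; for the forward direction one fixes $\chi$ and chooses $x$ with $\chi(x)=1$, and the reverse direction is immediate. I would record this as the working criterion applied to each of $A$, $B$ and $A\times_TB$.

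For the forward implication I would take a $\Delta$-weak approximate identity $(x_\gamma,y_\gamma)_\gamma$ for $A\times_TB$ and test it against the two families of characters. Testing against $(0,g)$ gives $g(y_\gamma)\to1$ for all $g\in\Delta(B)$, so $(y_\gamma)_\gamma$ works for $B$; testing against $(f,f\circ T)$ gives $f(x_\gamma+T(y_\gamma))\to1$ for all $f\in\Delta(A)$, so $(x_\gamma+T(y_\gamma))_\gamma$ works for $A$.

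For the converse, given $\Delta$-weak approximate identities $(a_\alpha)_\alpha$ for $A$ and $(b_\beta)_\beta$ for $B$, the main obstacle is that the naive product net $(a_\alpha,b_\beta)$ does not work: the character $(f,f\circ T)$ sends it to $f(a_\alpha)+f(T(b_\beta))$, and since $f(T(b_\beta))=(f\circ T)(b_\beta)\to1$ whenever $f\circ T\in\Delta(B)$, this value tends to $2$ rather than $1$. The remedy I would use is to cancel the spurious $T$-contribution by passing to the net $u_{(\alpha,\beta)}=(a_\alpha-T(b_\beta),\,b_\beta)$, directed by the product of the two index sets. Then $(0,g)$ sends $u_{(\alpha,\beta)}$ to $g(b_\beta)\to1$, while $(f,f\circ T)$ sends it to $f(a_\alpha)-f(T(b_\beta))+f(T(b_\beta))=f(a_\alpha)\to1$; by the criterion above this makes $(u_{(\alpha,\beta)})$ a $\Delta$-weak approximate identity for $A\times_TB$, finishing the proof.
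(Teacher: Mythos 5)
Your proof is correct and follows essentially the same route as the paper: the same description of $\Delta(A\times_TB)$ as $\{(f,f\circ T):f\in\Delta(A)\}\cup\{(0,g):g\in\Delta(B)\}$, the same extraction of $(y_\gamma)_\gamma$ and $(x_\gamma+T(y_\gamma))_\gamma$ in the forward direction, and the same corrected net $(a_\alpha-T(b_\beta),b_\beta)_{(\alpha,\beta)}$ in the converse. Your reformulation of the defining condition as $\chi(u_\gamma)\to1$ via multiplicativity is a clean streamlining of the paper's direct estimates, but the underlying argument is identical.
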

 \begin{proof}
 Let $((a_{\alpha},b_{\alpha}))_{\alpha}$ be a $\Delta$-weak approximate identity for $A\times_TB$. For every $\psi\in \Delta(B)$ and $b\in B$ we have,
 $$\big|\psi(bb_{\alpha})-\psi(b)\big|=\big|(0,\psi)\big((0,b)(a_{\alpha},b_{\alpha})\big)-(0,\psi)(0,b)\big|\longrightarrow 0.$$
 Then $(b_{\alpha})_{\alpha}$ is a $\Delta$-weak approximate identity for $B$. Also for every $\varphi\in \Delta(A)$ and $a\in A$,
 \begin{align*}
 \big|\varphi\big(a(a_{\alpha}+T(b_{\alpha}))\big)-\varphi(a)\big|&=\big|\varphi\big(aa_{\alpha}+aT(b_{\alpha})\big)-\varphi(a)\big|\\
 &=\big|(\varphi,\varphi\circ T)\big((a,0)(a_{\alpha},b_{\alpha})\big)-(\varphi,\varphi\circ T)(a,0)|\\
 &\longrightarrow 0.
\end{align*}  
 Thus $\big|\varphi\big(a(a_{\alpha}+T(b_{\alpha}))\big)-\varphi(a)\big|\longrightarrow 0.$ Therefore $(a_{\alpha}+T(b_{\alpha}))_{\alpha}$ is a $\Delta$-weak approximate identity for $A$.
 
 Conversely, suppose that $(a_{\alpha})_{\alpha}$ and $(b_{\beta})_{\beta}$ are $\Delta$-weak approximate identity for $A$ and $B$, respectively. We claim that $((a_{\alpha}-T(b_{\beta}),b_{\beta}))_{(\alpha,\beta)}$ is a $\Delta$-weak approximate identity for $A\times_TB$. In fact for every $a\in A, b\in B$ and $\varphi\in \Delta(A)$, we have
 \begin{align*}
 \big|(\varphi,\varphi\circ T)&\big((a,b)(a_{\alpha}-T(b_{\beta}),b_{\beta})\big)-(\varphi,\varphi\circ T)(a,b)\big|\\
 &=\big|(\varphi,\varphi\circ T)\big(aa_{\alpha}+T(b)a_{\alpha}-T(bb_{\beta}),b_{\beta}b)\big)-(\varphi,\varphi\circ T)(a,b)\big|\\
 &=\big|\varphi(aa_{\alpha})+\varphi(T(b)a_{\alpha})-\varphi(a)-\varphi\circ T(b)\big|\\
 &\leq \big|\varphi(aa_{\alpha})-\varphi(a)\big|+\big|\varphi(T(b)a_{\alpha})-\varphi( T(b))\big|\longrightarrow 0.
 \end{align*}
 Similarly, we may show that $\big|(0,\psi)\big((a,b)(a_{\alpha}-T(b_{\beta}),b_{\beta})\big)-(0,\psi)(a,b)\big|\longrightarrow 0$, for all $\psi\in \Delta(B)$. Therefore $((a_{\alpha}-T(b_{\beta}),b_{\beta}))_{(\alpha,\beta)}$ is a $\Delta$-weak approximate identity for $A\times_TB$.
 \end{proof} 
 
 \begin{theorem}
 Let $A$ and $B$ be two Banach algebras and $T\in \mathtt{Hom}(B,A)$. If $A\times_TB$ is $\Delta$-weak character amenable, then  both $A$ and $B$ are $\Delta$-weak character amenable. In the case that $T(B)$ is dense in $A$ the converse is also valid.
 \end{theorem}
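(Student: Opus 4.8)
The plan is to reduce everything to Theorem \ref{thm1} after pinning down the characters of $A\times_TB$. First I would record the structural fact
$$\Delta(A\times_TB)=\{(\varphi,\varphi\circ T):\varphi\in\Delta(A)\}\cup\{(0,\psi):\psi\in\Delta(B)\},$$
obtained by writing a character as $(F,G)$ with $F\in A^*,\ G\in B^*$, imposing multiplicativity on the products $(a_1,0)(a_2,0)$, $(0,b_1)(0,b_2)$ and $(a,0)(0,b)$, and concluding that $F,G$ are each multiplicative with $G=F\circ T$ whenever $F\neq0$. The second, purely formal, observation is the engine of both directions: since every $\chi'\in\Delta(A\times_TB)$ is multiplicative, for any net $(n_\gamma)\subseteq\ker\chi$ one has $\chi'(w\,n_\gamma)-\chi'(w)=\chi'(w)\big(\chi'(n_\gamma)-1\big)$. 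Hence, by Theorem \ref{thm1}, $A\times_TB$ is $\Delta$-weak $\chi$-amenable if and only if there is a bounded net $(n_\gamma)\subseteq\ker\chi$ with $\chi'(n_\gamma)\to1$ for every $\chi'\in\Delta(A\times_TB)\setminus\{\chi\}$. This turns each implication into a problem about projecting or constructing such nets.

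For the forward direction I would take a bounded net witnessing $\Delta$-weak $\chi$-amenability of $A\times_TB$ and push it to each factor. Given $\psi\in\Delta(B)\cup\{0\}$, apply Theorem \ref{thm1} to $\chi=(0,\psi)$ (with $\psi=0$ giving $\chi=0$) to get a bounded net $((a_\alpha,b_\alpha))\subseteq\ker\chi$; then $b_\alpha\in\ker\psi$, and evaluating $\chi'=(0,\psi')$ on $(0,b)(a_\alpha,b_\alpha)=(T(b)a_\alpha,bb_\alpha)$ gives $|\psi'(bb_\alpha)-\psi'(b)|\to0$ for all $\psi'\in\Delta(B)\setminus\{\psi\}$, so $(b_\alpha)$ is a bounded $\Delta$-weak approximate identity for $B$ in $\ker\psi$ and $B$ is $\Delta$-weak $\psi$-amenable. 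Symmetrically, for $\varphi\in\Delta(A)\cup\{0\}$ apply Theorem \ref{thm1} to $\chi=(\varphi,\varphi\circ T)$: then $a_\alpha+T(b_\alpha)\in\ker\varphi$, and evaluating $\chi'=(\varphi',\varphi'\circ T)$ on $(a,0)(a_\alpha,b_\alpha)=(aa_\alpha+aT(b_\alpha),0)$ yields $|\varphi'(a(a_\alpha+T(b_\alpha)))-\varphi'(a)|\to0$, so $(a_\alpha+T(b_\alpha))$ witnesses $\Delta$-weak $\varphi$-amenability of $A$.

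For the converse I would build one family of nets that handles all $\chi$. The naive product net $(a_\alpha,b_\beta)$ fails: under a character $(\varphi',\varphi'\circ T)$ both coordinates tend to contribute a limit $1$, giving the wrong value $2$, and this double counting is the main obstacle. The remedy is the shifted net $n_{(\alpha,\beta)}=(a_\alpha-T(b_\beta),\,b_\beta)$, modelled on the net used in the Proposition above, with $(a_\alpha),(b_\beta)$ drawn from the right amenability data: for $\chi=(\varphi_0,\varphi_0\circ T)$ take $a_\alpha\in\ker\varphi_0$ from $\Delta$-weak $\varphi_0$-amenability of $A$ and $(b_\beta)$ a $\Delta$-weak approximate identity of $B$; for $\chi=(0,\psi_0)$ take $(a_\alpha)$ a $\Delta$-weak approximate identity of $A$ and $b_\beta\in\ker\psi_0$ from $\Delta$-weak $\psi_0$-amenability of $B$; for $\chi=0$ take approximate identities on both sides. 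The shift is designed so that the identity $\varphi'(a_\alpha-T(b_\beta))+(\varphi'\circ T)(b_\beta)=\varphi'(a_\alpha)$ makes the $T(b_\beta)$-terms cancel; one then checks $\chi(n_{(\alpha,\beta)})=0$ while $\chi'(n_{(\alpha,\beta)})=\varphi'(a_\alpha)\to1$ on characters of the first type and $\chi'(n_{(\alpha,\beta)})=\psi'(b_\beta)\to1$ on those of the second type, with boundedness following from $\|T\|\le1$. The density of $T(B)$ in $A$ is invoked to guarantee that $\varphi\circ T\in\Delta(B)$ for every $\varphi\in\Delta(A)$ (otherwise $\varphi$ would vanish on the dense subalgebra $T(B)$), which is what correctly matches the characters of $A\times_TB$ with the available approximate identities of $A$ and $B$. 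Applying Theorem \ref{thm1} to $A\times_TB$ then gives $\Delta$-weak $\chi$-amenability for every $\chi\in\Delta(A\times_TB)\cup\{0\}$, hence $\Delta$-weak character amenability of $A\times_TB$.
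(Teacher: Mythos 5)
Your argument is correct, and it takes a genuinely different route from the paper's in both directions. For the necessity, the paper works directly with the functional $m$: it defines $m_\varphi(f)=m(f,f\circ T)$ and $m_\psi(g)=m(0,g)$ and verifies the two defining conditions using the dual module actions, whereas you pass to nets via Theorem \ref{thm1} and project the witnessing net to $(a_\alpha+T(b_\alpha))$ and $(b_\alpha)$; the two are essentially equivalent in content, and your multiplicativity reformulation, $\chi'(wn_\gamma)-\chi'(w)=\chi'(w)\big(\chi'(n_\gamma)-1\big)$, so that one only needs $\chi'(n_\gamma)\to 1$ for $\chi'\neq\chi$, is a clean simplification the paper does not exploit. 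The real divergence is in the converse. The paper uses two different nets: $\big((0,b_\beta)\big)$ with $(b_\beta)\subseteq\ker\psi$ for the characters $h=(0,\psi)$ --- and it is precisely there that density of $T(B)$ is invoked, to approximate $a$ by $T(b_i)$ and control $\varphi\big(aT(b_\beta)\big)$ --- and the shifted net $\big((a_\alpha-T(b_\beta),b_\beta)\big)$ with $(b_\beta)\subseteq\ker(\varphi\circ T)$ for $h=(\varphi,\varphi\circ T)$. You instead use the shifted net uniformly for every $\chi$, taking $(b_\beta)$ to be a full $\Delta$-weak approximate identity of $B$ (available from $\Delta$-weak $0$-amenability) when $\chi=(\varphi_0,\varphi_0\circ T)$, and relying on the exact cancellation $\varphi'\big(a_\alpha-T(b_\beta)\big)+\varphi'\big(T(b_\beta)\big)=\varphi'(a_\alpha)$. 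This buys you two things. First, your construction never actually uses the density of $T(B)$: the reason you give for needing it (that $\varphi\circ T\in\Delta(B)$) is invoked nowhere in your argument, since the classification of $\Delta(A\times_TB)$ and the cancellation hold even when $\varphi\circ T=0$; so you in fact prove the converse without that hypothesis, and you should either delete that closing sentence or state explicitly that density is not required by your proof. Second, you sidestep a delicate point in the paper's treatment of $h=(0,\psi)$: if some $\varphi\in\Delta(A)$ satisfies $\varphi\circ T=\psi$, the paper's net $\big((0,b_\beta)\big)\subseteq\ker(0,\psi)$ forces $\varphi\circ T(b_\beta)=0$, so the required convergence against the character $(\varphi,\varphi\circ T)$ cannot be obtained from it, whereas your shifted net handles that character without difficulty.
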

 \begin{proof}
 Suppose that $A\times_TB$ is $\Delta$-weak character amenable. Let $\varphi\in \Delta(A)\cup \{0\}$. Then there exists $m\in (A\times_TB)^{**}$ such that $m(\varphi,\varphi\circ T)=0$ and $m(h.(a,b))=h(a,b)$ for all $(a,b)\in A\times_TB$ and $h\in \Delta(A\times_TB)$, where $h\neq (\varphi,\varphi\circ T)$. Define $m_{\varphi}\in A^{**}$ by $m_{\varphi}(f)=m(f,f\circ T) (f\in A^*)$. For every $a\in A$ and  $\varphi'\in \Delta(A)$, we have 
 \begin{align*}
 m_{\varphi}(\varphi'.a)&=m\big(\varphi'.a,(\varphi'.a)\circ T\big)\\
 &=m\big(\varphi'.a,\varphi'\circ L_{a}T\big)\\
 &=m\big((\varphi',\varphi'\circ T).(a,0)\big)\\
 &=(\varphi',\varphi'\circ T)(a,0)\\
 &=\varphi'(a).
 \end{align*}
 Also $m_{\varphi}(\varphi)=m(\varphi,\varphi\circ T)=0$. Thus $A$ is a $\Delta$-weak $\varphi$-amenable. Therefore $A$ is $\Delta$-weak character amenable.
 
 Let $\psi\in \Delta(B)\cup\{0\}$. From the $\Delta$-weak character amenability of $A\times_TB$ it follows that there exists a $m\in (A\times_TB)^{**}$ such that $m(0,\psi)=0$ and $m(h.(a,b))=h(a,b)$ for all $(a,b)\in A\times_TB$ and $h\in \Delta(A\times_TB)$, where $h\neq (0,\psi)$.  Define $m_{\psi}\in B^{**}$ by $m_{\psi}(g)=m(0,g)$. So $m_{\psi}(\psi)=m(0,\psi)=0$ and 
 $$m_{\psi}(\psi'.b)=m(0,\psi'.b)=m\big((0,\psi').(0,b)\big)=(0,\psi')(0,b')=\psi'(b),$$
 for all $b\in B$ and $\psi'\in \Delta(B)$. Therefore  $B$ is $\Delta$-weak character amenable.
 
 Conversely, let $A$ and $B$ be $\Delta$-weak character amenable. We show that for every $h\in \Delta(A\times_TB)$, $A\times_TB$ is $\Delta$-weak $h$-amenable. For see this we first assume that $h=(0,\psi)$, where $\psi\in \Delta(B)$. Since $B$ is $\Delta$-weak character amenable by Theorem \ref{thm1} there exists a net $(b_{\beta})_{\beta}\subseteq \ker \psi$ such that $|\psi'(bb_{\beta})-\psi'(b)|\longrightarrow 0$, for all $b\in B$ and $\psi'\in \Delta(B)$, where $\psi'\neq \psi$. We claim that $((0,b_{\beta}))_{\beta}\subseteq \ker h$ is a bounded $\Delta$-weak approximate identity for $A\times_TB$. Indeed, for every $a\in A, b\in B$ and $\psi'\in \Delta(B)$, we have 
 $$\big|(0,\psi')\big((a,b)(0,b_{\beta})\big)-(0,\psi')(a,b)\big|=\big|\psi'(bb_{\beta})-\psi'(b)\big|\longrightarrow 0.$$
 Also from density of $T(B)$ in $A$, it follows that for every $a\in A$, there exist a net $(b_i)_i$ in $B$ such that $\lim_iT(b_i)=a$. Hence for every $a\in A, b\in B$ and  $\varphi\in \Delta(A)$, we have 
 \begin{align*}
 \big|(\varphi,\varphi\circ T)\big(&(a,b)(0,b_{\beta})\big)-(\varphi,\varphi\circ T)(a,b)\big|\\
 &=\big|(\varphi,\varphi\circ T)\big(aT(b_{\beta}),bb_{\beta}\big)-(\varphi,\varphi\circ T)(a,b)\big|\\
 &=\big|\varphi\big(aT(b_{\beta})\big)+\varphi\circ T(bb_{\beta})-\varphi(a)-\varphi\circ T(b)\big|\\
 &\leq \big|\varphi\big(aT(b_{\beta})\big)-\varphi(a)\big|+\big|\varphi\circ T(bb_{\beta})-\varphi\circ T(b)\big|\\
 &=\lim_i\big|\varphi\circ T(b_ib_{\beta})-\varphi\circ T(b_i)\big|+\big|\varphi\circ T(bb_{\beta})-\varphi\circ T(b)\big|\longrightarrow 0.
 \end{align*}
 So, Theorem \ref{thm1} implies that $A\times_TB$ is $\Delta$-weak $(0,\psi)$-amenable. 
 
 Now let $h=(\varphi,\varphi\circ T)$, where $\varphi\in \Delta(A)$. Since $A$ is $\Delta$-weak $\varphi$-amenable by Theorem \ref{thm1}, there exists a net $(a_{\alpha})_{\alpha}\subseteq \ker \varphi$ such that $|\varphi'(aa_{\alpha})-\varphi'(a)|\longrightarrow 0$, for all $a\in A$ and $\varphi'\in \Delta(A)$, where $\varphi'\neq \varphi$. Also since $B$ is $\Delta$-weak $\varphi\circ T$-amenable again by Theorem \ref{thm1}, there exists a net $(b_{\beta})_{\beta}\subseteq \ker (\varphi\circ T)$ such that $|\psi'(bb_{\beta})-\psi'(b)|\longrightarrow 0$, for all $b\in B$ and $\psi'\in \Delta(B)$, where $\psi'\neq \varphi\circ T$. We show that $((a_{\alpha}-T(b_{\beta}),b_{\beta}))_{(\alpha,\beta)}$ is a bounded $\Delta$-weak approximate identity for $A\times_TB$ in $\ker(\varphi,\varphi\circ T)$. It is easy to see that $((a_{\alpha}-T(b_{\beta}),b_{\beta}))_{(\alpha,\beta)}\subseteq \ker(\varphi,\varphi\circ T)$  and is bounded. For every $a\in A, b\in B$ and $\psi'\in \Delta(B)$, we have 
 $$\big|(0,\psi')\big((a,b)(a_{\alpha}-T(b_{\beta}),b_{\beta})\big)-(0,\psi')\big(a,b\big)\big|=\big|\psi'(bb_{\beta})-\psi'(b)\big|\longrightarrow 0,$$
 and for every $\varphi'\in \Delta(A)$,
 \begin{align*}
 \big|(&\varphi',\varphi'\circ T)\big((a,b)(a_{\alpha}-T(b_{\beta}),b_{\beta})\big)-(\varphi',\varphi'\circ T)\big((a,b)\big)\big|\\
 &= \big|(\varphi',\varphi'\circ T)\big(aa_{\alpha}+T(b)a_{\alpha}-T(bb_{\beta}),bb_{\beta}\big)-(\varphi',\varphi'\circ T)\big((a,b)\big)\big|\\
 &=\big|\varphi'(aa_{\alpha})+\varphi'(T(b)a_{\alpha})-\varphi'\circ T(bb_{\beta})+\varphi'\circ T(bb_{\beta})-\varphi'(a)-\varphi'\circ T(b)\big|\\
 &\leq \big|\varphi'(aa_{\alpha})-\varphi'(a)\big|+\big|\varphi'(T(b)a_{\alpha})-\varphi'(T(b))\big|\longrightarrow 0.
 \end{align*}
 Then $((a_{\alpha}-T(b_{\beta}),b_{\beta}))_{(\alpha,\beta)}$ is a bounded $\Delta$-weak approximate identity for $A\times_TB$ in $\ker(\varphi,\varphi\circ T)$. Again Theorem \ref{thm1}, yield that $A\times_TB$ is $\Delta$-weak $(\varphi,\varphi\circ T)$-amenable. Therefore $A\times_TB$ is $\Delta$-weak character amenable.
 \end{proof}
 \section{$\Delta$-weak character amenability of abstract Segal algebras}
 We start this section with the basic definition of abstract Segal algebra; see \cite{2} for more details.
 Let $(A,\|.\|_A)$ be a Banach algebra. A Banach algebra $(B,\|.\|_B)$ is an abstract Segal algebra with respect to $A$ if:
 \begin{enumerate}
 \item $B$ is a dense left ideal in $A$;
 \item there exists $M>0$ such that $\|b\|_A\leq M\|b\|_B$ for all $b\in B$;
 \item there exists $C>0$ such that $\|ab\|_B\leq C\|a\|_A\|b\|_B$ for all $a,b\in B$.
\end{enumerate} 

Several authors have studied various notions of amenability for abstract Segal algebras; see, for example, \cite{7,1}.
 
 To prove our next result we need to quote the following lemma from \cite{1}.
 \begin{lemma}
 Let $A$ be a Banach algebra and let $B$ be an abstract Segal algebra with respect to $A$. Then $\Delta(B)=\{\varphi|_B:\varphi\in \Delta(A)\}$.
 \end{lemma}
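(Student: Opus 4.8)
The plan is to establish the set equality by proving the two inclusions separately. The inclusion $\{\varphi|_B:\varphi\in\Delta(A)\}\subseteq\Delta(B)$ is routine, so I would dispatch it first. Starting from $\varphi\in\Delta(A)$, the restriction $\varphi|_B$ is multiplicative because $B$ is a subalgebra of $A$, and it is $\|\cdot\|_B$-continuous by condition~(2), since $|\varphi(b)|\le\|\varphi\|\,\|b\|_A\le M\|\varphi\|\,\|b\|_B$ for all $b\in B$. Finally $\varphi|_B\ne 0$: if $\varphi$ vanished on the dense subset $B$, then by continuity it would vanish on all of $A$, contradicting $\varphi\in\Delta(A)$. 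Hence $\varphi|_B\in\Delta(B)$.

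The reverse inclusion $\Delta(B)\subseteq\{\varphi|_B:\varphi\in\Delta(A)\}$ is the substantive part. Given $\psi\in\Delta(B)$, I would choose $b_0\in B$ with $\psi(b_0)=1$ (possible after normalizing any element not in $\ker\psi$) and define $\varphi\colon A\to\mathbb{C}$ by $\varphi(a)=\psi(ab_0)$; this is meaningful because $B$ is a left ideal, so $ab_0\in B$. A preliminary step is to upgrade condition~(3) from $a,b\in B$ to arbitrary $a\in A$, $b\in B$: approximating $a$ by a sequence $(a_n)\subseteq B$ with $a_n\to a$ in $\|\cdot\|_A$, the estimate $\|a_nb-a_mb\|_B\le C\|a_n-a_m\|_A\|b\|_B$ shows that $(a_nb)$ is $\|\cdot\|_B$-Cauchy, and its $\|\cdot\|_B$-limit must equal $ab$ by~(2); letting $n\to\infty$ yields $\|ab\|_B\le C\|a\|_A\|b\|_B$. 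With this, $|\varphi(a)|=|\psi(ab_0)|\le\|\psi\|\,\|ab_0\|_B\le C\|\psi\|\,\|b_0\|_B\,\|a\|_A$, so $\varphi$ is $\|\cdot\|_A$-continuous. Moreover, for $b\in B$ one has $\varphi(b)=\psi(bb_0)=\psi(b)\psi(b_0)=\psi(b)$, so $\varphi|_B=\psi$ and in particular $\varphi\ne 0$.

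The remaining task is multiplicativity of $\varphi$, and this is where I expect the only real obstacle to sit. The difficulty is that $\psi$ is continuous only for $\|\cdot\|_B$ whereas $\varphi$ is continuous only for $\|\cdot\|_A$, and these norms are not equivalent, so one cannot extend $\psi$ to $A$ by naive $\|\cdot\|_A$-density. The resolution is to prove multiplicativity directly at the level of $A$ by a density argument: for $a,a'\in A$ pick $a_n,a_n'\in B$ with $a_n\to a$, $a_n'\to a'$ in $\|\cdot\|_A$. Since $a_na_n'\in B$ and $\varphi|_B=\psi$ is multiplicative, $\varphi(a_na_n')=\psi(a_na_n')=\psi(a_n)\psi(a_n')=\varphi(a_n)\varphi(a_n')$; passing to the limit and invoking continuity of $\varphi$ together with joint continuity of multiplication in $A$ gives $\varphi(aa')=\varphi(a)\varphi(a')$. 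I would emphasize that this route sidesteps any commutativity hypothesis: a direct verification of multiplicativity from the formula $\varphi(a)=\psi(ab_0)$ would require controlling commutators such as $b_0b-bb_0$, but the density argument makes this unnecessary. Consequently $\varphi\in\Delta(A)$ with $\varphi|_B=\psi$, which proves the inclusion and completes the equality.

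Altogether, the proof rests on three ingredients provided by the abstract Segal structure: the density of $B$ in $A$ (used twice, once for nonvanishing of restrictions and once for propagating multiplicativity), the norm comparison~(2), and the module-type estimate~(3) that secures $\|\cdot\|_A$-continuity of the candidate extension. The key conceptual point, and the step I would flag as the crux, is separating the two continuity regimes and recovering multiplicativity on $A$ purely by approximation rather than by an algebraic manipulation of the defining formula.
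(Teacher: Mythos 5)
Your proof is correct and complete: both inclusions are handled properly, and the extension $\varphi(a)=\psi(ab_0)$ together with the density argument for multiplicativity and the upgraded estimate $\|ab\|_B\le C\|a\|_A\|b\|_B$ for $a\in A$ is exactly the standard route. The paper itself gives no proof (it quotes the lemma from the reference on character amenability of abstract Segal algebras), and your argument is essentially the one used there, so there is nothing to contrast.
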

 \begin{theorem}
 Let $A$ be a Banach algebra and let $B$ be an abstract Segal algebra with respect to $A$. If $B$ is $\Delta$-weak character amenable, then so is $A$. In the case that $B^2$ is dense in $B$ and $B$ has a bounded approximate identity the converse is also valid.
 \end{theorem}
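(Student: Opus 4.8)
The plan is to route everything through the net characterisation of Theorem~\ref{thm1}, after two preliminary observations. First, since every $\psi\in\Delta(A)$ is nonzero and multiplicative, for each fixed such $\psi$ the condition $|\psi(aa_\alpha)-\psi(a)|\to 0$ for all $a$ is equivalent to $\psi(a_\alpha)\to 1$: write $\psi(aa_\alpha)=\psi(a)\psi(a_\alpha)$ and choose $a$ with $\psi(a)\neq 0$. The same reduction holds in $\Delta(B)$. Thus $\Delta$-weak $\varphi$-amenability of $A$ amounts to a $\|\cdot\|_A$-bounded net $(a_\alpha)\subseteq\ker\varphi$ with $\psi(a_\alpha)\to 1$ for every $\psi\in\Delta(A)\setminus\{\varphi\}$. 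Second, since $B$ is dense in $A$ and characters are contractive, the restriction map $\psi\mapsto\psi|_B$ is injective on $\Delta(A)$; with the quoted Lemma it is a bijection of $\Delta(A)$ onto $\Delta(B)$, and in particular $\psi\neq\varphi$ if and only if $\psi|_B\neq\varphi|_B$. Throughout, the case $\varphi=0$ is included, with the convention $\ker 0=A$.

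For the forward implication I would fix $\varphi\in\Delta(A)\cup\{0\}$ and note $\varphi|_B\in\Delta(B)\cup\{0\}$. The $\Delta$-weak $\varphi|_B$-amenability of $B$ supplies, via Theorem~\ref{thm1}, a net $(b_\alpha)\subseteq\ker(\varphi|_B)\subseteq B$, bounded in $\|\cdot\|_B$, with $\phi(b_\alpha)\to 1$ for every $\phi\in\Delta(B)\setminus\{\varphi|_B\}$. I then read this net inside $A$: the inequality $\|b_\alpha\|_A\le M\|b_\alpha\|_B$ makes it $\|\cdot\|_A$-bounded; $\varphi(b_\alpha)=\varphi|_B(b_\alpha)=0$ places it in $\ker\varphi$; and for $\psi\in\Delta(A)\setminus\{\varphi\}$ injectivity of restriction gives $\psi|_B\neq\varphi|_B$, whence $\psi(b_\alpha)=\psi|_B(b_\alpha)\to 1$. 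Theorem~\ref{thm1} then yields $\Delta$-weak $\varphi$-amenability of $A$, hence $\Delta$-weak character amenability.

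For the converse I would fix $\varphi\in\Delta(A)\cup\{0\}$ and extract from the $\Delta$-weak $\varphi$-amenability of $A$ a $\|\cdot\|_A$-bounded net $(a_\alpha)\subseteq\ker\varphi$ with $\psi(a_\alpha)\to 1$ for all $\psi\in\Delta(A)\setminus\{\varphi\}$; let $(e_j)$ be the bounded approximate identity of $B$. The candidate is the product net $c_{\alpha,j}:=a_\alpha e_j$, which lies in $B$ because $B$ is a left ideal, and in $\ker(\varphi|_B)$ because $\varphi(a_\alpha e_j)=\varphi(a_\alpha)\varphi(e_j)=0$. Given $\phi\in\Delta(B)\setminus\{\varphi|_B\}$, lift it to the unique $\psi\in\Delta(A)\setminus\{\varphi\}$ with $\psi|_B=\phi$; then $\phi(c_{\alpha,j})=\psi(a_\alpha e_j)=\psi(a_\alpha)\psi(e_j)$, where $\psi(a_\alpha)\to 1$ and, since $(e_j)$ is a bounded approximate identity and $\psi|_B$ is a nonzero character of $B$, also $\psi(e_j)=\psi|_B(e_j)\to 1$. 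A routine product-net estimate, $\psi(a_\alpha)\psi(e_j)-1=(\psi(a_\alpha)-1)\psi(e_j)+(\psi(e_j)-1)$, gives $\phi(c_{\alpha,j})\to 1$ along the product directed set. As $\varphi$ ranges over $\Delta(A)\cup\{0\}$, $\varphi|_B$ ranges over all of $\Delta(B)\cup\{0\}$, so Theorem~\ref{thm1} delivers $\Delta$-weak character amenability of $B$.

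The main obstacle is the $\|\cdot\|_B$-boundedness of $(c_{\alpha,j})$: the submultiplicative estimate $\|ab\|_B\le C\|a\|_A\|b\|_B$ is given only for $a,b\in B$, whereas here $a_\alpha\in A$. I would close this gap by first extending that estimate to all $a\in A$, $b\in B$: approximating $a$ by elements of $B$ in $\|\cdot\|_A$, their products with $b$ are $\|\cdot\|_B$-Cauchy, hence $\|\cdot\|_B$-convergent to $ab\in B$, and the inequality passes to the limit. With this, $\|c_{\alpha,j}\|_B\le C\|a_\alpha\|_A\|e_j\|_B$ is uniformly bounded and the construction closes. Note also that a bounded approximate identity already forces $B=B^2$ by Cohen's factorisation theorem, so the density of $B^2$ is automatic; the genuine difficulty is packaging the $A$-level net into the ideal $B$ while keeping control of the stronger norm.
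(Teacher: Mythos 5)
Your proposal is correct, and it follows the same overall strategy as the paper (both directions are routed through Theorem~\ref{thm1}, with the bounded approximate identity of $B$ used to push the net from $A$ into $B$), but the execution differs in ways worth noting. First, your reduction of the $\Delta$-weak approximate identity condition to $\psi(a_\alpha)\to 1$ for each $\psi\neq\varphi$ streamlines both directions; in particular it cleanly disposes of the forward implication, where the paper instead writes $|\psi(ab_\alpha)-\psi(a)|=\lim_i|\psi|_B(b_ib_\alpha)-\psi|_B(b_i)|$ for an approximating net $b_i\to a$ — an interchange of limits that is only justified by the $\|\cdot\|_A$-boundedness of $(b_\alpha)$, which the paper does not invoke. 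Second, for the converse the paper forms $b_\alpha=\lim_i(e_ia_\alpha e_i)$, a limit whose existence in $(B,\|\cdot\|_B)$ is not established, whereas your double-indexed net $c_{\alpha,j}=a_\alpha e_j$ avoids taking any such limit and reduces everything to a transparent product-of-nets estimate; your explicit extension of $\|ab\|_B\le C\|a\|_A\|b\|_B$ to $a\in A$ supplies the $\|\cdot\|_B$-boundedness that Theorem~\ref{thm1} requires and that the paper leaves implicit. Finally, your observation that Cohen factorization makes the density of $B^2$ in $B$ automatic is accurate and clarifies a hypothesis whose role in the paper's argument (it is invoked only to place $b_\alpha$ in $\ker(\varphi|_B)$, which already follows from $\varphi(a_\alpha)=0$ and multiplicativity) is obscure. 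In short, your route buys a proof that is both tighter and slightly more economical in its hypotheses.
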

 \begin{proof}
  Let $\varphi \in\Delta(A)$. Since $B$ is $\Delta$-weak $\varphi|_B$-amenable by Theorem \ref{thm1}, there exists a $\Delta$-weak approximate identity  $(b_{\alpha})_{\alpha}$ in $\ker(\varphi|_B)$ such that $\Big|\psi|_B(bb_{\alpha})-\psi|_B(b)\Big|\longrightarrow 0$, for all $b\in B$ and $\psi\in \Delta(A)$, where $\psi\neq \varphi|_B$. Let $\psi\in \Delta(A)$ and $a\in A$.  From the density of $B$ in $A$ it follows that there exists a net $(b_i)_i\subseteq B$ such that $\lim_ib_i=a$. So
 $$\Big|\psi(ab_{\alpha})-\psi(a)\Big|=\lim_i\Big|\psi|_B(b_ib_{\alpha})-\psi|_B(b_i)\Big|\longrightarrow 0.$$
  Thus $(b_{\alpha})_{\alpha}$ is a $\Delta$-weak approximate identity for $A$. Therefore $A$ is $\Delta$-weak $\varphi$-amenable. Then $A$ is $\Delta$-weak character amenable.
 
 Conversely, suppose that $A$ is $\Delta$-weak character amenable. Let $\varphi|_B\in \Delta(B)$. By Theorem\ref{thm1}, there exists a $\Delta$-weak approximate identity  $(a_{\alpha})_{\alpha}$ in $\ker(\varphi)$ such that $\Big|\psi(aa_{\alpha})-\psi(a)\Big|\longrightarrow 0$, for all $a\in A$ and $\psi\in \Delta(A)$, where $\psi\neq \varphi$.  Let $(e_i)_i$ be a bounded approximate identity for $B$ with bound $M> 0.$ Set $b_{\alpha}=\lim_i(e_ia_{\alpha}e_i),$ for all $\alpha$. We claim that $(b_{\alpha})_{\alpha}$ is a $\Delta$-weak approximate identity for $B$ in $\ker(\varphi|_B)$. From the facts that $B^2$ is dense in $B$ and continuity of $\varphi$ it follows that $b_{\alpha}\subseteq \ker(\varphi|_B)$. Moreover, for every $\psi|_B\in \Delta(B)$ and $b\in B$, we have 
 \begin{align*}
 \Big|\psi|_B(bb_{\alpha})-\psi|_B(b)\Big|&=\lim_i\Big|\psi|_B(be_ia_{\alpha}e_i)-\psi|_B(b)\Big|\\
 &=\lim_i\Big|\psi|_B(be_i^2a_{\alpha})-\psi|_B(b)\Big|\\
 &=\Big|\psi|_B(ba_{\alpha})-\psi|_B(b)\Big|\longrightarrow 0.
 \end{align*}
Hence, $(b_{\alpha})_{\alpha}$ is a $\Delta$-weak approximate identity for $B$.  So $B$ is $\Delta$-weak $\varphi|_B$-amenable. Therefore $B$ is $\Delta$-weak  character amenable.
 \end{proof}

 \section{$\Delta$-weak character amenability of module extension Banach algebras}
 
 Let $A$ be a Banach algebra and $X$ be a Banach $A$-bimodule. The $l^1$-direct sum of $A$ and $X$, denoted by $A\oplus_1X$, with the product defined by
 $$(a,x)(a',x')=(aa',a.x'+x.a')\hspace{1cm}(a,a'\in A, x,x'\in X),$$
 is a Banach algebra that is called the module extension Banach algebra of $A$ and $X$.
 
 Using the fact that the element $(0,x)$ is nilpotent in $A\oplus_1X$ for all $x\in X$, it is easy to verify that $$\Delta(A\oplus_1X)=\{\tilde{\varphi}:\varphi\in \Delta(A)\},$$
  where $\tilde{\varphi}(a,x)=\varphi(a)$ for all $a\in A$ and $x\in X$.
\begin{theorem}
Let $A$ be a Banach algebra and $X$ be a Banach $A$-bimodule. Then $A\oplus_1X$ is $\Delta$-weak character amenable if and only if $A$ is $\Delta$-weak character amenable.
\end{theorem}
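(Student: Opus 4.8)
The plan is to reduce both implications to Theorem~\ref{thm1} by exploiting the description $\Delta(A\oplus_1 X)=\{\tilde\varphi:\varphi\in\Delta(A)\}$ recorded just above the statement. The crucial structural observation is that every character on $A\oplus_1 X$ annihilates the $X$-coordinate and that the first coordinate of a product $(a,x)(a',x')$ is simply $aa'$; consequently, for any $\psi\in\Delta(A)$ one has $\tilde\psi\big((a,x)(a',x')\big)=\psi(aa')$ and $\tilde\psi(a,x)=\psi(a)$. Moreover $\varphi\mapsto\tilde\varphi$ is a bijection of $\Delta(A)\cup\{0\}$ onto $\Delta(A\oplus_1 X)\cup\{0\}$ sending $0$ to $0$, and it carries $\Delta(A)\setminus\{\varphi\}$ onto $\Delta(A\oplus_1 X)\setminus\{\tilde\varphi\}$. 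Thus it suffices, for each fixed $\varphi\in\Delta(A)\cup\{0\}$, to match the bounded $\Delta$-weak approximate identities in $\ker(\varphi)$ (for $A$) with those in $\ker(\tilde\varphi)$ (for $A\oplus_1 X$).

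For the forward implication I would assume $A\oplus_1 X$ is $\Delta$-weak character amenable and fix $\varphi\in\Delta(A)\cup\{0\}$. By Theorem~\ref{thm1} applied to $\tilde\varphi$ there is a bounded net $\big((a_\alpha,x_\alpha)\big)_\alpha\subseteq\ker(\tilde\varphi)$ with $\big|\tilde\psi\big((a,x)(a_\alpha,x_\alpha)\big)-\tilde\psi(a,x)\big|\to 0$ for all $(a,x)$ and all $\tilde\psi\neq\tilde\varphi$. I then pass to the first coordinates $(a_\alpha)_\alpha$: from $(a_\alpha,x_\alpha)\in\ker(\tilde\varphi)$ we get $\varphi(a_\alpha)=0$, so $(a_\alpha)_\alpha\subseteq\ker(\varphi)$, and $\|a_\alpha\|\le\|(a_\alpha,x_\alpha)\|$ keeps the net bounded. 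Taking $x=0$ above and using the structural identity yields $|\psi(aa_\alpha)-\psi(a)|\to 0$ for every $a\in A$ and $\psi\in\Delta(A)\setminus\{\varphi\}$. Hence $(a_\alpha)_\alpha$ is a bounded $\Delta$-weak approximate identity for $A$ in $\ker(\varphi)$, and Theorem~\ref{thm1} gives that $A$ is $\Delta$-weak $\varphi$-amenable; as $\varphi$ was arbitrary, $A$ is $\Delta$-weak character amenable.

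For the converse I would assume $A$ is $\Delta$-weak character amenable and fix $\tilde\varphi\in\Delta(A\oplus_1 X)\cup\{0\}$. Choosing by Theorem~\ref{thm1} a bounded net $(a_\alpha)_\alpha\subseteq\ker(\varphi)$ with $|\psi(aa_\alpha)-\psi(a)|\to 0$ for all $a\in A$ and $\psi\in\Delta(A)\setminus\{\varphi\}$, I lift it to $\big((a_\alpha,0)\big)_\alpha$ in $A\oplus_1 X$. This net is bounded, lies in $\ker(\tilde\varphi)$ since $\tilde\varphi(a_\alpha,0)=\varphi(a_\alpha)=0$, and a direct computation gives $(a,x)(a_\alpha,0)=(aa_\alpha,\,x.a_\alpha)$, whence $\tilde\psi\big((a,x)(a_\alpha,0)\big)=\psi(aa_\alpha)\to\psi(a)=\tilde\psi(a,x)$ for every $\tilde\psi\neq\tilde\varphi$. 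Thus $\big((a_\alpha,0)\big)_\alpha$ is a bounded $\Delta$-weak approximate identity for $A\oplus_1 X$ in $\ker(\tilde\varphi)$, and Theorem~\ref{thm1} yields $\Delta$-weak $\tilde\varphi$-amenability of $A\oplus_1 X$ for each $\tilde\varphi$.

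I do not expect a serious analytic obstacle: once $\Delta(A\oplus_1 X)$ is identified, both directions are driven by the single identity $\tilde\psi\big((a,x)(a',x')\big)=\psi(aa')$, which eliminates the module variable entirely. This is also why, unlike the Lau-product case, no density or approximate-identity hypothesis is needed — the $X$-component never contributes a cross term. The only points demanding care are bookkeeping ones: correctly including the zero functional in the bijection $\varphi\mapsto\tilde\varphi$ so that the $\varphi=0$ case is covered, and verifying that the projected and lifted nets remain inside the appropriate kernels and stay bounded. The $X$-components of the products ($x.a_\alpha$, and $a.x_\alpha+x.a_\alpha$) are irrelevant to every estimate because each character vanishes on them.
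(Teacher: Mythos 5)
Your proposal is correct and follows essentially the same route as the paper's proof: both directions reduce to Theorem~\ref{thm1} via the identification $\Delta(A\oplus_1X)=\{\tilde\varphi:\varphi\in\Delta(A)\}$, lifting a bounded $\Delta$-weak approximate identity $(a_\alpha)_\alpha\subseteq\ker(\varphi)$ to $(a_\alpha,0)_\alpha$ in one direction and projecting $(a_\alpha,x_\alpha)_\alpha$ onto its first coordinates in the other. Your explicit handling of the $\varphi=0$ case and of boundedness is slightly more careful than the paper's, but the argument is the same.
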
 
  \begin{proof}
  Suppose that $A$ is $\Delta$-weak character amenable. Let $\tilde{\varphi}\in \Delta(A\oplus_1X)$. By Theorem \ref{thm1}, there exists a $\Delta$-weak approximate identity  $(a_{\alpha})_{\alpha}$ for $A$ in $\ker(\varphi)$ such that $\big|\psi(aa_{\alpha})-\psi(a)\big|\longrightarrow 0$, for all $a\in A$ and $\psi\in \Delta(A)$, where $\psi\neq \varphi$. We show that $(a_{\alpha},0)_{\alpha}$ is a $\Delta$-weak approximate identity for $A\oplus_1X$ in $\ker(\tilde{\varphi})$. Clearly, $(a_{\alpha},0)_{\alpha}\subseteq \ker(\tilde{\varphi}).$ For every $a\in A, x\in X$ and $\tilde{\psi}\in \Delta(A\oplus_1X)$, we have
  \begin{align*}
  \big|\tilde{\psi}\big((a,x)(a_{\alpha},0)\big)-\tilde{\psi}(a,x)\big|&=\big|\tilde{\psi}\big(aa_{\alpha},x.a_{\alpha}\big)-\tilde{\psi}(a,x)\big|\\
  &=\big|\psi(aa_{\alpha})-\psi(a)\big|\longrightarrow 0.
  \end{align*}
  Thus $(a_{\alpha},0)_{\alpha}$ is a $\Delta$-weak approximate identity. Hence $A\oplus_1X$ is $\Delta$-weak $\tilde{\varphi}$-amenable. Therefore $A\oplus_1X$ is $\Delta$-weak character amenable. 
  
  For the converse, let $\varphi\in \Delta(A)$. Then by Theorem \ref{thm1} there exists a $\Delta$-weak approximate identity  $(a_{\alpha},x_{\alpha})_{\alpha}$ in $\ker(\tilde{\varphi})$ such that $\big|\tilde{\psi}\big((a,x)(a_{\alpha},x_{\alpha})\big)-\tilde{\psi}(a,x)\big|\longrightarrow 0$, for all $a\in A, x\in X$ and $\tilde{\psi}\in \Delta(A\oplus_1X)$, where $\tilde{\psi}\neq \tilde{\varphi}$. So, 
  \begin{align*}
  \big|\psi(aa_{\alpha})-\varphi(a)\big|&=\big|\tilde{\psi}(aa_{\alpha},a.x_{\alpha}+x.a_{\alpha})-\tilde{\psi}(a,x)\big|\\
  &=\big|\tilde{\psi}\big((a,x)(a_{\alpha},x_{\alpha})\big)-\tilde{\psi}(a,x)\big|\longrightarrow 0,
  \end{align*}
  for all $a\in A$ and $\psi\in \Delta(A)$. Moreover, $\varphi(a_{\alpha})=\tilde{\varphi}(a_{\alpha},x_{\alpha})=0$, for all $\alpha$. Thus $(a_{\alpha})_{\alpha}\subseteq\ker(\varphi)$ is $\Delta$-weak approximate identity for $A$. Again by Theorem \ref{thm1}, we conclude that $A$ is $\Delta$-weak $\varphi$-amenable. Therefore  $A$ is $\Delta$-weak character amenable.
  \end{proof}
{\bf Acknowledgement.}  We are grateful to the office of Graduate studies of the university of Isfahan for their support. 

\bibliographystyle{amsplain}

\end{document}